\setlist{
leftmargin=2em, 
topsep = 4pt, 
itemsep = 1pt, 
}
\title{An existence theorem for single leader multi-follower games with direct preference maps}
\author[1]{Yutthakan Chummongkhon}
\author[1,2]{Poom Kumam}
\author[1,2]{Parin Chaipunya\thanks{Corresponding author}$^{,}$}
\affil[1]{
  {Department of Mathematics, Faculty of Science, King Mongkut's University of Technology Thonburi},
  {126 Pracha Uthit Rd.},
  {Bang Mod, Thung Khru, Bangkok},
  {10140}, 
  {Thailand.}
}
\affil[2]{
  {Center of Excellence in Theoretical and Computational Science (TaCS-CoE), King Mongkut's University of Technology Thonburi},
  {126 Pracha Uthit Rd.},
  {Bang Mod, Thung Khru, Bangkok},
  {10140}, 
  {Thailand.}

  Email: yuttagan.24@mail.kmutt.ac.th, parin.cha@kmutt.ac.th and  poom.kum@kmutt.ac.th;
  \vspace{.25cm}
}
\date{}
\begin{document}
\maketitle \vspace{-1.8cm}
\thispagestyle{draftfirstpage}

\begin{abstract}
  This paper concerns with an existence of a solution for a single leader multi-follower game (SLMFG), where the followers jointly solve an abstract economy problem.
  Recall that an abstract economy problem is an extension of a generalized Nash equilibrium problem (GNEP) in the sense that the preference of each player can be described without numerical criteria.
  The results of this paper therefore extend the known literature concerning an existence of a solution of a SLMFG.
  Due to the lack of criterion functions in the lower-level, the technique we used is quite different from those that deal with GNEPs.
  In particular, we argue that the follower's abstract economy profiles, {\itshape i.e.,} constraint-preference couples, belong to a particular metric space where the response map is proved to be upper semicontinuous.

  {\bfseries Keywords:} Single leader multi-follower game; Bilevel optimization; Nash equilibrium; Abstract economy.
  %
\end{abstract}


\section{Introduction}

A bilevel programming, or bilevel optimization problem, refers to a class of optimization problems that involve two players, one is classified as the \emph{leader} and the other one as the \emph{follower}.
When the leader makes her decision, the follower observes and makes an \emph{optimal response} by solving the \emph{lower-level} optimization problem, parametrized by the leader's action.
The goal of the leader is then to make a decision so that, altogeter with the follower's optimal response, gives her the \emph{best} objective value.
When the single follower is replaced with multiple followers, they reponse to the leader's action by solving a parametrized equilibrium-type problem.
This approach is broadly referred to as a \emph{single leader multi-follower game} (or briefly, \emph{SLMFG}).
These classes of bilevel problems have posed several interesting and important questions, theoretically and practically, in the literature. 
For instance, the issues regarding the KKT reformulation (or MPCC reformulation) were discussed in~\cite{zbMATH06008674, zbMATH07063784, arXiv:2503.14962}.
The existence results for several different settings of a SLMFG were studied in~\cite{zbMATH07289896, zbMATH07462111, zbMATH07794717, zbMATH07429239, zbMATH07950122}.

In this paper, we propose an existence theorem for a single leader multi-follower game (SLMFG) in which the followers' preferences are directly given by the preference maps without the need of objective functions.
Thus, the lower-level problem here is presented as an abstract economy, \`a la~\citet{zbMATH03488939}, parametrized by the leader's action.
Let us state precisely the problem of our study for this paper.
Consider the~$\numf$ followers that are represented by a set~$\Follower := \set{1,\dots,\numf}$.
The single leader, denoted by~$\leader$, owns an action set~$\lControl$ and each follower~$\follower \in \Follower$ owns an action set~$\fControl_{\follower}$.
We write~$\fControl := \prod_{\follower \in \Follower} \fControl_{\follower}$ and for each~$\follower \in \Follower$ we write~$\fControl_{-\follower} := \prod_{\gollower \in \Follower \setminus \set{\follower}} \fControl_{\gollower}$ whose generic element is denoted by~$\fcontrol_{-\follower}$.
It is typical to represent~$\fControl$ as~$\fControl_{\follower} \times \fControl_{-\follower}$ and to decompose~$\fcontrol \in \fControl$ into~$(\fcontrol_{\follower},\fcontrol_{-\follower}) \in \fControl_{\follower} \times \fControl_{-\follower}$, and also to extract~$\fcontrol_{-\follower}$ from~$\fcontrol$ the \emph{other followers' component}.
The leader is equipped with an objective function~$\lcriterion : \lControl \times \fControl \to \R$, while each follower~$\follower \in \Follower$ is equipped with a constraint map~$\Cons_{\follower} : \lControl \times \fControl_{-\follower} \multimap \fControl_{\follower}$ and a preference map~$\Pref_{\follower} : \lControl \times \fControl \multimap \fControl_{\follower}$.
For each~$\lcontrol \in \lControl$ and~$\follower \in \Follower$, it is customary to write~$\Cons_{\follower}^{\lcontrol}(\cdot) := \Cons_{\follower}(\lcontrol,\cdot)$ and~$\Pref_{\follower}^{\lcontrol}(\cdot) := \Pref_{\follower}(\lcontrol,\cdot)$.
The SLMFG that we focus on in this paper has the following form:
\begin{subequations}\label{eqn:SLMFG}
  \begin{empheq}[left=\empheqlbrace]{align}
    \min_{\lcontrol,\fcontrol} \quad
        & \lcriterion(\lcontrol,\fcontrol) \\
      \text{s.t.}  \quad
        & \lcontrol \in \lCons \subset \lControl  \\
        & \forall \follower \in \Follower: \label{eqn:NE_AE1}\\
        & \ \lfloor \  \fcontrol_{\follower} \in \Cons_{\follower}^{\lcontrol}(\fcontrol_{-\follower}), \quad \Cons_{\follower}^{\lcontrol}(\fcontrol_{-\follower}) \cap \Pref_{\follower}^{\lcontrol}(\fcontrol) = \emptyset. \label{eqn:NE_AE2}
  \end{empheq}
\end{subequations}
In particular, the constraints~\eqref{eqn:NE_AE1}--\eqref{eqn:NE_AE2} describe that the followers respond to the leader's signal~$\lcontrol$ with a \emph{Nash equilibrium} of a~\emph{$\lcontrol$-parametrized abstract economy} given by the pair of maps~$(\Cons_{\follower}^{\lcontrol},\Pref_{\follower}^{\lcontrol})_{\follower \in \Follower}$.
For convenience, at each~$\lcontrol \in \lControl$ we write
\begin{align*}
  \NE(\lcontrol) := \set{\fcontrol = (\fcontrol_{\follower})_{\follower \in \Follower} \in \fControl \mid \forall \follower \in \Follower:\ \fcontrol_{\follower} \in \Cons_{\follower}^{\lcontrol}(\fcontrol_{-\follower}), \quad \Cons_{\follower}^{\lcontrol}(\fcontrol_{-\follower}) \cap \Pref_{\follower}^{\lcontrol}(\fcontrol) = \emptyset}
\end{align*}
to denote the set of all such Nash equilibrium points.

The SLMFG~\eqref{eqn:SLMFG} indeed covers many particular cases of Nash equilibrium-like problems.
For instance, if one fix~$\varepsilon \geq 0$ and set
\begin{align*}
  & \Pref_{\follower}^{\lcontrol}(\fcontrol_{\follower},\fcontrol_{-\follower}) := \set{\fycontrol_{\follower} \in \fControl_{\follower} \mid \fcriterion_{\follower}(\lcontrol,\fycontrol_{\follower},\fcontrol_{-\follower}) < \fcriterion_{\follower}(\lcontrol,\fcontrol_{\follower},\fcontrol_{-\follower}) - \varepsilon, \quad (\forall \fycontrol \in \fControl_{\follower})}, \\
  & \fcriterion_{\follower} : \lControl \times \fControl \to \R,
\end{align*}
then~\eqref{eqn:SLMFG} reduces to
\begin{subequations}
  \begin{empheq}[left=\empheqlbrace]{align}
    \min_{\lcontrol,\fcontrol} \quad
        & \lcriterion(\lcontrol,\fcontrol) \\
      \text{s.t.}  \quad
        & \lcontrol \in \lCons \subset \lControl  \\
        & \forall \follower \in \Follower: \\
        & \ \lfloor \ \fcontrol_{\follower} \in \varepsilon-\argmin_{\fycontrol_{\follower}} \set{\fcriterion_{\follower}(\lcontrol,\fycontrol_{\follower},\fcontrol_{-\follower}) \mid \fycontrol_{\follower} \in \Cons_{\follower}^{\lcontrol}(\fcontrol_{-\follower})},
  \end{empheq}
\end{subequations}
which is a single leader multi-follower game where the followers solves for an~$\varepsilon$-Nash equilibrium of a~$\lcontrol$-parametrized (generalized) Nash equilibrium problem.
When~$\varepsilon = 0$, this is a classical single leader multi-follower game.

Likewise, one could also retrieve the case where the followers play a set-valued Nash equilibrium problem by setting
\begin{subequations}
  \begin{empheq}[left=\empheqlbrace]{align*}
    & \Pref_{\follower}^{\lcontrol}(\fcontrol_{\follower},\fcontrol_{-\follower}) := \set{\fycontrol_{\follower} \in \fControl_{\follower} \mid (\fycontrol_{\follower},\fcontrol_{-\follower}) \succeq_{\follower} (\fcontrol_{\follower},\fcontrol_{-\follower}) }, \\
    & \succeq_{\follower} \ \subset \fControl \times \fControl.
  \end{empheq}
\end{subequations}
Then~$\Cons_{\follower}^{\lcontrol}(\fcontrol_{-\follower}) \cap \Pref_{\follower}^{\lcontrol}(\fcontrol_{\follower},\fcontrol_{-\follower}) = \emptyset$ describes that there is no feasible point~$\fycontrol_{\follower} \in \Cons_{\follower}^{\lcontrol}(\fcontrol_{-\follower})$ in which~$(\fycontrol_{\follower},\fcontrol_{-\follower})$ is not preferred over~$(\fcontrol_{\follower},\fcontrol_{-\follower})$.
In the language of vector optimization, this says that~$(\fcontrol_{\follower},\fcontrol_{-\follower})$ is \emph{weakly efficient}.

The aim of this paper is to develop an existence theorem for a SLMFG of the form~\eqref{eqn:SLMFG} where the followers' problem is an abstract economy problem parametrized by the leader's decision.
We take the classical approach by showing that the solution map (or the response map) of the lower-level problem has closed graph.
For this, we characterize the solution map with a parametric minimizer set of its associated gap function.
We introduce a regularity condition and show that, under the uniform metric, the solution map is upper semicontinuous.

\section{Preliminaries}

In this section, we recall basic notions in set-valued analysis which includes distance between points and sets, convergence of sets, and continuity of set-valued maps.

Suppose that~$\Domain$ and~$\CoDomain$ are subsets of (possibly different) Banach spaces. A set-valued map~$\SetMap : \Domain \multimap \CoDomain$ is said to be
\begin{itemize}[label=$\circ$, leftmargin=*]
  \item \emph{closed} if its graph~$\gr(\SetMap) := \set{(\xpoint,\ypoint) \in \Domain \times \CoDomain \mid \ypoint \in \SetMap(\xpoint)}$ is closed in~$\Domain \times \CoDomain$,
  \item \emph{compact} if~$\gr(\SetMap)$ is compact in~$\Domain \times \CoDomain$,
  \item \emph{closed-valued} if~$\SetMap(\xpoint)$ is closed for all~$\xpoint \in \Domain$,
  \item \emph{upper semicontinuous} on~$\SubDomain \subset \Domain$ if for every~$\xpoint \in \SubDomain$ and each open set~$\ONhood$ with~$\SetMap(\xpoint) \subset \ONhood$, there exists a neighborhood~$\VNhood$ of~$\xpoint$ such that~$\SetMap(\xpoint') \subset \ONhood$ for all~$\xpoint' \in \VNhood$,
  \item \emph{lower semicontinuous} on~$\SubDomain \subset \Domain$ if for every~$\xpoint \in \SubDomain$ and each open set~$\ONhood$ with~$\SetMap(\xpoint) \cap \ONhood \neq \emptyset$, there exists a neighborhood~$\VNhood$ of~$\xpoint$ such that~$\SetMap(\xpoint') \cap \ONhood \neq \emptyset$ for all~$\xpoint' \in \VNhood$,
  \item \emph{continuous} on~$\SubDomain \subset \Domain$ if it is both upper and lower semicontinuous on~$\SubDomain$.
\end{itemize}
The following results (see {\itshape e.g.} \cite{zbMATH00045282}) regarding~$\SetMap : \Domain \multimap \CoDomain$ are widely known in set-valued analysis:
\begin{enumerate}[label=(\roman*), leftmargin=*]
  \item If~$\SetMap$ is closed-valued and upper semicontinuous, then it is closed.
  \item If~$\SetMap$ is closed and~$\CoDomain$ is compact, then it is upper semicontinuous.
  \item If~$\CoDomain$ is compact, then~$\SetMap$ is upper semicontinuous at~$\xpoint \in \Domain$ if and only if for any sequences~$(\xpoint^{n})$ in~$\Domain$ and~$(\ypoint^{n})$ in~$\CoDomain$ with~$\xpoint^{n} \to \xpoint$ and~$\ypoint^{n} \in \SetMap(\xpoint^{n})$ for each~$n \in \N$, it holds that~$\ypoint \in \SetMap(\xpoint)$.
  \item If~$\CoDomain$ is compact, then~$\SetMap$ is lower semicontinuous at~$\xpoint \in \Domain$ if and only if for any sequence~$(\xpoint^{n})$ in~$\Domain$ with~$\xpoint^{n} \to \xpoint$ and any~$\ypoint \in \SetMap(\xpoint)$, there exists~$\ypoint^{n} \in \SetMap(\xpoint^{n})$ (for each~$n \in \N$) such that~$\ypoint^{n} \to \ypoint$.
\end{enumerate}

Let us denote with~$\Compact(\Domain)$ the family of compact subsets of~$\Domain$.
Recall that the distance on~$\Domain$ induces a metric on~$\Compact(\Domain)$ that preserves the completeness.
For any~$\xpoint \in \Domain$ and~$\DSet \in \Compact(\Domain)$, we define the \emph{displacement from~$\DSet$} by
\begin{align*}
  d(\xpoint,\DSet) := \inf_{\ypoint \in \DSet} d(\xpoint,\ypoint).
\end{align*}
Observe that~$\xpoint \in \DSet \iff d(\xpoint,\DSet) = 0$.
For any~$\CSet,\DSet \in \Compact(\Domain)$, the \emph{excess} of~$\CSet$ from~$\DSet$ is defined by
\begin{align*}
  e(\CSet,\DSet) := \sup_{\xpoint \in \CSet} \inf_{\ypoint \in \DSet} d(\xpoint,\ypoint).
\end{align*}
It is important to notice that~$e(\CSet,\DSet) \neq e(\DSet,\CSet)$ in general.
Finally, the Hausdorff distance between~$\CSet,\DSet \in \Compact(\Domain)$ is given by
\begin{align*}
  h(\CSet,\DSet) := \max\set{e(\CSet,\DSet), e(\DSet,\CSet)}.
\end{align*}
For any~$\DSet \subset \Domain$ and~$\varepsilon > 0$, we adopt the notation
\begin{align*}
  \Nhood_{\varepsilon}[\DSet] := \set{\xpoint \in \Domain \mid d(\xpoint,\DSet) < \varepsilon}
\end{align*}
to denote an open ball about a set~$\DSet$.
When~$\CSet = \set{\xpoint}$ is singleton, we prefer to write~$\Nhood_{\varepsilon}[\xpoint]$ instead of~$\Nhood_{\varepsilon}[\set{\xpoint}]$.
With this, one may also notice an equivalent formulation of the Hausdorff distance, which reads
\begin{align*}
  h(\CSet,\DSet) = \inf\set{\eta > 0 \mid \Nhood_{\eta}[\CSet] \supset \DSet, \quad \Nhood_{\eta}[\DSet] \supset \CSet}.
\end{align*}
It is well-known that~$(\Compact(\Domain),h)$ is a metric space which is complete if and only if~$\Domain$ is complete.
Exploiting the Hausdorff metric, we have a further characterization of continuity of a set-valued map.
That is, if~$\CoDomain$ is compact, then~$\SetMap : \Domain \multimap \CoDomain$ is continuous at~$\xpoint \in \Domain$ if and only if~$\lim_{n \to \infty} h(\SetMap(\xpoint^{n}),\SetMap(\xpoint)) = 0$ for any sequence~$(\xpoint^{n})$ in~$\Domain$ with~$\xpoint^{n} \to \xpoint$.

Next, consider a sequence~$(\CSet^{n})$ of subsets of~$\Domain$.
Its \emph{outer} and \emph{inner limits} could be defined, respectively, by
\begin{align*}
  \Limsup_{n \to \infty} \CSet^{n} := \Set{\xpoint \in \Domain \left|
    \begin{array}{c}
      \text{$\xpoint$ is an accumulation point of some sequence~$(\xpoint^{n})$} \\
      \text{such that~$\xpoint^{n} \in \CSet^{n}$ for all~$n \in \N$.}
    \end{array}\right.}
\end{align*}
and
\begin{align*}
  \Liminf_{n \to \infty} \CSet^{n} := \Set{\xpoint \in \Domain \left|
    \begin{array}{c}
      \text{$\xpoint$ is the limit of some convergent sequence~$(\xpoint^{n})$} \\
      \text{such that~$\xpoint^{n} \in \CSet^{n}$ for all~$n \in \N$.}
    \end{array}\right.}.
\end{align*}
We say that~$\CSet \subset \Domain$ is the \emph{Painlev\'e-Kuratowski limit} of~$(\CSet^{n})$, or symbolically that~$\CSet^{n} \overset{\rm{PK}}{\to} \CSet$, if
\begin{align*}
  \CSet = \Limsup_{n \to \infty} \CSet^{n} = \Liminf_{n \to \infty} \CSet^{n}.
\end{align*}
If~$\CSet$ and all~$\CSet^{n}$'s are compact, then~$\CSet^{n} \overset{\rm{PK}}{\to} \CSet$ if and only if~$\lim_{n \to \infty} h(\CSet^{n},\CSet) = 0$.
That is, the Painlev\'e-Kuratowski and the Hausdorff limits coincide for compact sequences.

\section{Stability analysis for abstract economy problems}

This section is dedicated to the stability analysis of the lower-level abstract economy.
The idea is that a decision~$\lcontrol$ of the leader \emph{selects} a tuple~$(\Cons_{\follower}^{\lcontrol},\Pref_{\follower}^{\lcontrol})_{\follower \in \Follower}$ representing an \emph{abstract economy profile}, from some particular class~$\Game$, that defines the followers's abstract economy problem.

\subsection{An existence result}

For the moment, we neglect the leader's ingredient and focus on the solvability of an abstract economy problem.
\begin{theorem}\label{thm:existence}
Consider a finite set~$\Follower$ of players.
  For each~$\follower \in \Follower$, let~$\fCONTROL_{\follower}$ be a Banach space and~$\fControl_{\follower} \subset \fCONTROL_{\follower}$ be nonempty, closed and convex.
  Suppose, for each~$\follower \in \Follower$, that the following assumptions hold
  \begin{enumerate}[label=(A\arabic*), leftmargin=*]
    \item\label{asmp:A1} the maps~$\Cons_{\nu} : \fControl_{-\follower} \multimap \fControl_{\follower}$ and~$\Pref_{\nu} : \fControl \multimap \fControl_{\follower}$ are upper semicontinuous with closed convex values,
    \item\label{asmp:A2} the map~$\Cons_{\nu}$ has nonempty values,
    \item\label{asmp:A3} the set~$\Improvement_{\follower} := \set{\fcontrol \in \fControl \mid \Cons_{\follower}(\fcontrol_{-\follower}) \cap \Pref_{\follower}(\fcontrol) = \emptyset}$ is open,
    \item\label{asmp:A4} for each~$\fcontrol \in \fControl$,~$\fcontrol_{\follower} \in \Cons_{\follower}(\fcontrol_{-\follower})$ implies~$\fcontrol_{\follower} \not\in \Pref_{\follower}(\fcontrol)$. 
  \end{enumerate}
  Then there is a point~$\bar{\fcontrol} \in \fControl$ such that
  \begin{equation} \label{eqn:Abstract_Econ}
    \bar{\fcontrol}_{\follower} \in \Cons_{\follower}(\bar{\fcontrol}_{-\follower}), \quad \Cons_{\follower}(\bar{\fcontrol}_{-\follower}) \cap \Pref_{\follower}(\bar{\fcontrol}) = \emptyset
  \end{equation}
  for all~$\follower \in \Follower$.
\end{theorem}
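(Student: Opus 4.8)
The plan is to realize a solution of~\eqref{eqn:Abstract_Econ} as a fixed point of a player-wise response correspondence and to invoke the Kakutani--Fan--Glicksberg fixed-point theorem. Write~$\fControl := \prod_{\follower \in \Follower} \fControl_{\follower}$, which (as the fixed-point step and the set-valued facts~(i)--(iv) require) we take to be compact and convex, and for each~$\follower \in \Follower$ set
\[
  U_{\follower} := \fControl \setminus \Improvement_{\follower} = \set{\fcontrol \in \fControl \mid \Cons_{\follower}(\fcontrol_{-\follower}) \cap \Pref_{\follower}(\fcontrol) \neq \emptyset},
\]
which is closed by~\ref{asmp:A3}. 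I would then define~$\Phi_{\follower} : \fControl \multimap \fControl_{\follower}$ to agree with~$\fcontrol \mapsto \Cons_{\follower}(\fcontrol_{-\follower}) \cap \Pref_{\follower}(\fcontrol)$ on~$U_{\follower}$ and with~$\fcontrol \mapsto \Cons_{\follower}(\fcontrol_{-\follower})$ on~$\Improvement_{\follower}$, gluing the two regimes across the frontier~$\partial\Improvement_{\follower}$ by a Urysohn-type regularization (a continuous weight that vanishes on~$U_{\follower}$ and is positive on~$\Improvement_{\follower}$), arranged so that~$\Phi_{\follower}(\fcontrol) \subset \Cons_{\follower}(\fcontrol_{-\follower})$ for every~$\fcontrol \in \fControl$.

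Next I would verify that each~$\Phi_{\follower}$ has nonempty, closed, convex values and is upper semicontinuous. Nonemptiness is~\ref{asmp:A2} on~$\Improvement_{\follower}$ and the very definition of~$U_{\follower}$ on~$U_{\follower}$; closedness and convexity of the values come from~\ref{asmp:A1} (an intersection of closed convex sets is closed convex, and the regularizing constraint is a sublevel set of a convex distance function); and on the interior of either region, upper semicontinuity is immediate from~\ref{asmp:A1} and facts~(i)--(iii), since~$\Cons_{\follower}$ and~$\Pref_{\follower}$ have closed graphs, hence so does~$\fcontrol \mapsto \Cons_{\follower}(\fcontrol_{-\follower}) \cap \Pref_{\follower}(\fcontrol)$, and this yields upper semicontinuity with compact values over a compact codomain. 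The product~$\Phi := \prod_{\follower \in \Follower} \Phi_{\follower} : \fControl \multimap \fControl$ inherits all of these properties, so the Kakutani--Fan--Glicksberg theorem furnishes a point~$\bar{\fcontrol} \in \Phi(\bar{\fcontrol})$.

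To finish, fix~$\follower \in \Follower$. If~$\bar{\fcontrol} \in U_{\follower}$, then~$\bar{\fcontrol}_{\follower} \in \Phi_{\follower}(\bar{\fcontrol}) \subset \Cons_{\follower}(\bar{\fcontrol}_{-\follower}) \cap \Pref_{\follower}(\bar{\fcontrol})$, so~$\bar{\fcontrol}_{\follower} \in \Cons_{\follower}(\bar{\fcontrol}_{-\follower})$ and~$\bar{\fcontrol}_{\follower} \in \Pref_{\follower}(\bar{\fcontrol})$; but~\ref{asmp:A4} applied to~$\bar{\fcontrol}$ gives~$\bar{\fcontrol}_{\follower} \notin \Pref_{\follower}(\bar{\fcontrol})$, a contradiction. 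Hence~$\bar{\fcontrol} \in \Improvement_{\follower}$, i.e.~$\Cons_{\follower}(\bar{\fcontrol}_{-\follower}) \cap \Pref_{\follower}(\bar{\fcontrol}) = \emptyset$, and moreover~$\bar{\fcontrol}_{\follower} \in \Phi_{\follower}(\bar{\fcontrol}) = \Cons_{\follower}(\bar{\fcontrol}_{-\follower})$. As~$\follower$ was arbitrary,~$\bar{\fcontrol}$ satisfies~\eqref{eqn:Abstract_Econ}.

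The crux of the argument is the upper semicontinuity of~$\Phi_{\follower}$ at the frontier~$\partial\Improvement_{\follower}$: there the value is the \emph{small} set~$\Cons_{\follower}(\fcontrol_{-\follower}) \cap \Pref_{\follower}(\fcontrol)$, yet such a point may be approached from within~$\Improvement_{\follower}$, where~$\Phi_{\follower}$ carries the \emph{large} value~$\Cons_{\follower}(\fcontrol_{-\follower})$, so keeping the graph closed across this interface is exactly what the regularization — together with the openness of~$\Improvement_{\follower}$ in~\ref{asmp:A3} — has to secure, and it is the one step that requires genuine care. An alternative route, should the direct gluing prove delicate, is to approximate each~$\Pref_{\follower}$ from outside by open-graph correspondences, solve the resulting approximate abstract economies by the Shafer--Sonnenschein theorem, and pass to the limit, using~\ref{asmp:A3} and the upper semicontinuity of~$\Pref_{\follower}$ to recover~\eqref{eqn:Abstract_Econ} for the limit point.
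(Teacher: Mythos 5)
Your overall strategy coincides with the paper's: build the Shafer--Sonnenschein selection map that equals $\Cons_{\follower}(\fcontrol_{-\follower}) \cap \Pref_{\follower}(\fcontrol)$ where this intersection is nonempty and $\Cons_{\follower}(\fcontrol_{-\follower})$ where it is empty, characterize solutions of \eqref{eqn:Abstract_Econ} as fixed points of the product map via \ref{asmp:A4}, and invoke a Kakutani--Fan--Glicksberg-type theorem; your closing fixed-point-to-equilibrium argument is exactly the paper's. The gap is in the middle: the upper semicontinuity of $\Phi_{\follower}$ --- the step you yourself single out as the crux --- is never established. You defer it to a ``Urysohn-type regularization'' across $\partial\Improvement_{\follower}$ without saying what the regularized correspondence is, why it remains nonempty-, convex-valued and u.s.c.\ at the interface, or, crucially, why a fixed point of the \emph{regularized} map still satisfies \eqref{eqn:Abstract_Econ}: any genuine blending of $\Cons_{\follower}\cap\Pref_{\follower}$ with $\Cons_{\follower}$ near the frontier destroys the clean dichotomy on which your final contradiction with \ref{asmp:A4} rests. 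As written, the proof is incomplete precisely where it matters.

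The difficulty you are trying to regularize away is an artifact of reading \ref{asmp:A3} literally. Taken at face value, \ref{asmp:A3} makes $\set{\fcontrol \in \fControl \mid \Cons_{\follower}(\fcontrol_{-\follower}) \cap \Pref_{\follower}(\fcontrol) = \emptyset}$ open, so your $U_{\follower}$ is closed, its boundary points carry the \emph{small} value $\Cons_{\follower}\cap\Pref_{\follower}$ and are approached from the region carrying the \emph{large} value $\Cons_{\follower}$; there the glued map can genuinely fail to be u.s.c., and no hypothesis of the theorem repairs this. The intended hypothesis --- forced by the paper's own proof, where $\Gmap_{\follower}$ must be nonempty-valued on $\Improvement_{\follower}$, and consistent with the later definition $\Improvement_{\follower}(\game) = \set{\fcontrol \mid \Cons_{\follower}(\fcontrol_{-\follower}) \cap \Pref_{\follower}(\fcontrol) \neq \emptyset}$ --- is that the set where the intersection is \emph{nonempty} is open. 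Under that reading no regularization is needed: on the open set the intersection of two u.s.c.\ closed-valued maps is u.s.c., while at a point of the closed complement the value is the large set $\Cons_{\follower}(\fcontrol_{-\follower})$, and since the glued map is contained in $\Cons_{\follower}$ everywhere, closedness of its graph there follows from the upper semicontinuity of $\Cons_{\follower}$ alone. Note also that you had to add compactness of $\fControl$ to run the fixed-point step; the statement only assumes each $\fControl_{\follower}$ closed and convex, so this is an extra hypothesis your argument (and, implicitly, the paper's appeal to Fan's theorem) requires.
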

\begin{proof}
  Let~$\follower \in \Follower$. Define~$\Gmap_{\follower} : \fControl \multimap \fControl_{\follower}$ by
  \begin{align}\label{eqn:Gmap}
    \Gmap_{\follower}(\fcontrol) := 
      \begin{cases}
        \Cons_{\follower}(\fcontrol_{-\follower}) \cap \Pref_{\follower}(\fcontrol) &\text{if~$\fcontrol \in \Improvement_{\follower}$}, \\
        \Cons_{\follower}(\fcontrol_{-\follower}) &\text{otherwise.}
      \end{cases}
  \end{align}
  Let~$\Gmap := \prod_{\follower \in \Follower} \Gmap_{\follower}$.
  It is clear that if~$\fcontrol \in \fControl$ satisfies~\eqref{eqn:Abstract_Econ} for all~$\follower \in \Follower$, then~$\fcontrol \in \Gmap(\fcontrol)$.
  The converse holds with the use of~\ref{asmp:A4}. This means~\eqref{eqn:Abstract_Econ} is characterized with~$\fcontrol \in \fControl$ being a fixed point of the map~$\Gmap$.
  From~\cite[Theorem 3.1.8]{zbMATH00045282},~$\Gmap$ is upper semicontinuous and compact on~$\fControl$ with nonempty closed convex values.
  The existence of such a fixed point then follows from~\cite[Theorem 1 and Lemma 3]{Fan1952}
\end{proof}


\subsection{A stability result}

In this subsection, we consider a class of \emph{abstract economy profiles}, which are described by tuples~$(\Cons_{\follower},\Pref_{\follower})_{\follower \in \Follower}$ of constraint-preference maps, where the continuity in the perturbation implies the continuity of the corresponding Nash equilibrium points.

Always consider~$\Follower$ a finite set of players and for each~$\follower \in \Follower$,~$\fControl_{\follower}$ is a closed convex subset of a Banach space~$\fCONTROL_{\follower}$.
Consider~$\Game$ the class containing~$\game = (\Cons_{\follower},\Pref_{\follower})_{\follower \in \Follower}$, where~$\Cons_{\follower} : \fControl_{-\follower} \multimap \fControl_{\follower}$ and~$\Pref_{\follower} : \fControl \multimap \fControl_{\follower}$, satisfying the following two conditions
\begin{enumerate}[label=($\Game_{\arabic*}$), leftmargin=*]
  \item\label{asmp:A1_to_A3} the assumptions~\ref{asmp:A1}--\ref{asmp:A3} hold,
  \item\label{asmp:A4_strengthened} there exists~$\tau > 0$ such that for all~$\follower \in \Follower$,
    \begin{align*}
      \fcontrol_{\follower} \in \Cons_{\follower}(\fcontrol_{-\follower}) \implies \Nhood_{\tau}[\fcontrol_{\follower}] \cap \Pref_{\follower}(\fcontrol) = \emptyset.
    \end{align*}
\end{enumerate}
One could directly observe that~\ref{asmp:A4_strengthened} implies~\ref{asmp:A4}.
Therefore any~$\game = (\Cons_{\follower},\Pref_{\follower})_{\follower \in \Follower} \in \Game$ induces a nonempty solution set~$\NE(\game)$, where
\begin{align*}
  \NE(\game) := \set{\fcontrol \in \fControl \mid \fcontrol_{\follower} \in \Cons_{\follower}(\fcontrol_{-\follower}), \quad \Cons_{\follower}(\fcontrol_{-\follower}) \cap \Pref_{\follower}(\fcontrol) = \emptyset, \quad \forall \follower \in \Follower}.
\end{align*}

Given~$\IMPROVEMENT := (\IMPROVEMENT_{\follower})_{\follower \in \Follower}$, where for each~$\follower \in \Follower$,~$\IMPROVEMENT_{\follower} \subset \fControl$ is open relative to~$\fControl$, we define a subclass~$\Game(\IMPROVEMENT) \subset \Game$ by
\begin{align*}
  \Game(\IMPROVEMENT) := \set{\game \in \Game \mid \Improvement_{\follower}(\game) = \IMPROVEMENT_{\follower}, \quad \forall \follower \in \Follower},
\end{align*}
where~$\Improvement_{\follower}(\game) := \set{\fcontrol \in \fControl \mid \Cons_{\follower} (\fcontrol_{-\follower}) \cap \Pref_{\follower}(\fcontrol) \neq \emptyset}$.
On this subset~$\Game(\IMPROVEMENT)$, we impose a distance function~$\rho : \Game(\IMPROVEMENT) \times \Game(\IMPROVEMENT) \to \R_{+}$ given by
\begin{align*}
  \rho(\game,\hat{\game}) = 
  \sum_{\follower \in \Follower} \left[ \sup_{\fcontrol \in \fControl} h(\Cons_{\follower}(\fcontrol_{-\follower}), \hat{\Cons}_{\follower}(\fcontrol_{-\follower})) + \sup_{\fcontrol \in \fControl} h(\Pref_{\follower}(\fcontrol), \hat{\Pref}_{\follower}(\fcontrol)) \right],
\end{align*}
where~$\game = (\Cons_{\follower}, \Pref_{\follower})_{\follower \in \Follower}$ and~$\hat{\game} = (\hat{\Cons}_{\follower}, \hat{\Pref}_{\follower})_{\follower \in \Follower}$.
The pair~$(\Game(\IMPROVEMENT),\rho)$ is clearly a metric space, but its completeness is not so easy to see.
In fact, to prove its completeness, we need to assume a regularity condition below.
\begin{definition}
  A closed subset~$\regGame \subset \Game(\IMPROVEMENT)$ is said to be \emph{regular w.r.t.~$\IMPROVEMENT$} if
  \begin{enumerate}[label=$(\regGame_{\arabic*})$, leftmargin=*]
    \item\label{asmp:regGame1} the condition~\ref{asmp:A4_strengthened} holds with the same~$\tau > 0$ for any~$\game \in \regGame$,
    \item\label{asm:regGame2} there exists~$\alpha > 0$ such that for any~$\varepsilon > 0$, any~$\game = (\Cons_{\follower},\Pref_{\follower})_{\follower \in \Follower} \in \regGame$, and any~$\fcontrol \in \Improvement_{\follower} = \IMPROVEMENT_{\follower}$, it holds
  \begin{align*}
    \Nhood_{\varepsilon}[\Cons_{\follower}(\fcontrol_{-\follower})] \cap \Nhood_{\varepsilon}[\Pref_{\follower}(\fcontrol)] \subset \Nhood_{\alpha\varepsilon}[\Cons_{\follower}(\fcontrol_{-\follower}) \cap \Pref_{\follower}(\fcontrol)].
  \end{align*}
  \end{enumerate}
\end{definition}

Following directly from the defintion, one could observe that the above regularity always holds in finite dimensional spaces when the intersections~$\Cons_{\follower}(\fcontrol_{-\follower}) \cap \Pref_{\follower}(\fcontrol)$ are bounded.
\begin{lemma}\label{lem:completeness}
  If~$\regGame \subset \Game(\IMPROVEMENT)$ is regular w.r.t.~$\IMPROVEMENT$, then~$(\regGame,\rho)$ is complete.
\end{lemma}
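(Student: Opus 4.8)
The plan is the classical uniform-limit argument carried out on the Hausdorff metric, followed by a verification that the limit of a Cauchy sequence is again a regular profile. First I would take a $\rho$-Cauchy sequence $(\game^{n})_{n}$ in $\regGame$, written $\game^{n}=(\Cons_{\follower}^{n},\Pref_{\follower}^{n})_{\follower\in\Follower}$. Since $\rho$ is a finite sum of nonnegative terms, Cauchyness of $(\game^{n})$ forces, for each $\follower\in\Follower$, that $\sup_{\fcontrol\in\fControl}h(\Cons_{\follower}^{n}(\fcontrol_{-\follower}),\Cons_{\follower}^{m}(\fcontrol_{-\follower}))\to 0$ and $\sup_{\fcontrol\in\fControl}h(\Pref_{\follower}^{n}(\fcontrol),\Pref_{\follower}^{m}(\fcontrol))\to 0$ as $n,m\to\infty$; that is, $(\Cons_{\follower}^{n})_{n}$ and $(\Pref_{\follower}^{n})_{n}$ are uniformly Cauchy with respect to $h$. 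As each $\fControl_{\follower}$ is closed in a Banach space it is a complete metric space, so $(\Compact(\fControl_{\follower}),h)$ is complete, and the pointwise limits $\Cons_{\follower}(\fcontrol_{-\follower}):=\lim_{n}\Cons_{\follower}^{n}(\fcontrol_{-\follower})$ and $\Pref_{\follower}(\fcontrol):=\lim_{n}\Pref_{\follower}^{n}(\fcontrol)$ exist. Letting $m\to\infty$ in the uniform Cauchy estimate, with the continuity of $B\mapsto h(A,B)$, yields $\rho(\game^{n},\game)\to 0$ for $\game:=(\Cons_{\follower},\Pref_{\follower})_{\follower\in\Follower}$. Since $\regGame$ is closed in $(\Game(\IMPROVEMENT),\rho)$, it then suffices to prove $\game\in\Game(\IMPROVEMENT)$.

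The next step is to verify the structural requirements for $\game$. The parts of \ref{asmp:A1_to_A3} that concern only the values and upper semicontinuity pass to the uniform $h$-limit: nonemptiness, closedness (indeed compactness, which the definition of $\rho$ already presupposes) and convexity of sets all survive Hausdorff limits, and upper semicontinuity of $\Cons_{\follower}$ and $\Pref_{\follower}$ follows from a standard three-$\varepsilon$ argument — given an open set containing the compact limiting value at a point, pick $n$ so large that the $n$-th map is uniformly $h$-close, apply its upper semicontinuity on a slightly smaller enlargement, and transfer back. For \ref{asmp:A4_strengthened} I would use \ref{asmp:regGame1}: every $\game^{n}$ satisfies \ref{asmp:A4_strengthened} with one common $\tau>0$; if $\fcontrol_{\follower}\in\Cons_{\follower}(\fcontrol_{-\follower})$, pick $y^{n}\in\Cons_{\follower}^{n}(\fcontrol_{-\follower})$ with $y^{n}\to\fcontrol_{\follower}$, apply the uniform $\tau$-separation of $\game^{n}$ at the feasible point $(y^{n},\fcontrol_{-\follower})$, and pass to the limit through the $h$-convergence of the preference maps together with upper semicontinuity of $\Pref_{\follower}$, obtaining $\Nhood_{\tau}[\fcontrol_{\follower}]\cap\Pref_{\follower}(\fcontrol)=\emptyset$ (possibly after slightly shrinking $\tau$).

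It remains to identify the improvement regions: $\Improvement_{\follower}(\game)=\IMPROVEMENT_{\follower}$ for all $\follower\in\Follower$ (which also transfers \ref{asmp:A3} to $\game$, since $\IMPROVEMENT_{\follower}$ has the prescribed topological status). One inclusion is soft: if $\fcontrol\in\IMPROVEMENT_{\follower}=\Improvement_{\follower}(\game^{n})$ for every $n$, pick $z^{n}\in\Cons_{\follower}^{n}(\fcontrol_{-\follower})\cap\Pref_{\follower}^{n}(\fcontrol)$; since $d(z^{n},\Cons_{\follower}(\fcontrol_{-\follower}))\to 0$ with $\Cons_{\follower}(\fcontrol_{-\follower})$ compact and $d(z^{n},\Pref_{\follower}(\fcontrol))\to 0$ with $\Pref_{\follower}(\fcontrol)$ closed, a cluster point $z^{\ast}$ of $(z^{n})$ lies in $\Cons_{\follower}(\fcontrol_{-\follower})\cap\Pref_{\follower}(\fcontrol)$, so $\fcontrol\in\Improvement_{\follower}(\game)$. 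For the reverse inclusion I would use \ref{asm:regGame2}: writing $\varepsilon_{n}$ for the $\follower$-summand of $\rho(\game^{n},\game)$, so $\varepsilon_{n}\to 0$, any point $z^{\ast}$ of $\Cons_{\follower}(\fcontrol_{-\follower})\cap\Pref_{\follower}(\fcontrol)$ lies in $\Nhood_{\varepsilon_{n}}[\Cons_{\follower}^{n}(\fcontrol_{-\follower})]\cap\Nhood_{\varepsilon_{n}}[\Pref_{\follower}^{n}(\fcontrol)]$, and the uniform modulus $\alpha$ provided by \ref{asm:regGame2} is exactly what keeps the corresponding intersections $\Cons_{\follower}^{n}(\fcontrol_{-\follower})\cap\Pref_{\follower}^{n}(\fcontrol)$ nonempty and places $\fcontrol$ in $\IMPROVEMENT_{\follower}$ — in effect the same modulus upgrades the $h$-convergence of the constraint and preference maps to $h$-convergence of the intersections over $\IMPROVEMENT_{\follower}$. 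With $\game\in\Game(\IMPROVEMENT)$ established, closedness of $\regGame$ gives $\game\in\regGame$, and completeness follows.

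The genuinely delicate point, I expect, is this last step: showing that both the strengthened disjointness \ref{asmp:A4_strengthened} and the configuration of improvement regions persist under the passage to the $h$-limit. Plain Hausdorff convergence preserves neither emptiness nor nonemptiness of intersections — disjoint closed convex sets can have touching limits — so it is precisely the uniform separation constant $\tau$ of \ref{asmp:regGame1} and the uniform metric-regularity modulus $\alpha$ of \ref{asm:regGame2} that confine the limit profile $\game$ to $\Game(\IMPROVEMENT)$; without them the argument collapses. Everything before that — existence of the pointwise $h$-limits, the convergence $\rho(\game^{n},\game)\to 0$, and the value-theoretic parts of \ref{asmp:A1_to_A3} — is routine set-valued analysis.
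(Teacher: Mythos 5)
Your argument follows essentially the same route as the paper's proof of Lemma~\ref{lem:completeness}: pointwise Hausdorff limits of the uniformly Cauchy component maps, a three-$\varepsilon$ argument for upper semicontinuity of the limit maps, recovery of the $\tau$-separation \ref{asmp:A4_strengthened} by approximating a feasible point with points of $\Cons_{\follower}^{n}(\fcontrol_{-\follower})$ and passing to the limit, and identification of the improvement regions via the regularity of $\regGame$. You are in fact more explicit than the paper at that last step (the paper disposes of $\Improvement_{\follower}(\hat{\game})=\IMPROVEMENT_{\follower}$ with a one-line appeal to regularity), though note that invoking \ref{asm:regGame2} for $\game^{n}$ at a point not yet known to lie in $\IMPROVEMENT_{\follower}$ is, strictly speaking, circular --- a delicacy the paper's own proof does not resolve either.
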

\begin{proof}
  Let~$(\game^{n}) = (\Cons_{\follower}^{n},\Pref_{\follower}^{n})_{\follower \in \Follower}^{n \in \N}$ be a Cauchy sequence in~$\regGame$.
  For every~$\fcontrol \in \fControl$ and~$\follower \in \Follower$, the sequence~$(\Cons_{\follower}^{n}(\fcontrol_{-\follower}))^{n \in \N}$ of compact sets is Cauchy w.r.t. to the Hausdorff metric on~$\Compact(\fControl_{\follower})$, which in turn converges, in the Hausdorff metric, to a compact set~$\hat{\Cons}_{\follower}(\fcontrol_{-\follower}) \in \Compact(\fControl_{\follower})$.
  In the same way, the sequence~$(\Pref_{\follower}^{n}(\fcontrol))^{n \in \N}$ converges to a set~$\hat{\Pref}_{\follower}(\fcontrol) \in \Compact(\fControl_{\follower})$.
  Through these limits, we have defined the maps~$\hat{\Cons}_{\follower} : \fControl_{-\follower} \multimap \fControl_{\follower}$ and~$\hat{\Pref}_{\follower} : \fControl \multimap \fControl_{\follower}$ which produce compact convex values.
  The goal is to prove that~$\hat{\game} := (\hat{\Cons}_{\follower}, \hat{\Pref}_{\follower})_{\follower \in \Follower}$ belongs to~$\Game$, through the three claims below.

  {\bfseries Claim 1:} The maps~$\hat{\Cons}_{\follower}$ and~$\hat{\Pref}_{\follower}$ are upper semicontinuous.

  Take any sequences~$(\fcontrol^{n})$ and~$(\fycontrol^{n})$ in~$\fControl$ that are convergent, respectively, to~$\hat{\fcontrol} \in \fControl$ and~$\hat{\fycontrol} \in \fControl$ with the property that~$\fycontrol^{n} \in \hat{\Cons}_{\follower}(\fcontrol^{n})$ for all~$n \in \N$.
  It suffices to show that~$\hat{\fycontrol} \in \hat{\Cons}_{\follower}(\hat{\fcontrol})$.
  Now, for any~$\follower \in \Follower$ and $r,n \in \N$, we get
  \begin{align}
    d(\fycontrol^{n}, \hat{\Cons}_{\follower}(\hat{\fcontrol}))
    &\leq
    d(\fycontrol^{n}, \hat{\Cons}_{\follower}(\fcontrol^{n})) + h(\hat{\Cons}_{\follower}(\fcontrol^{n}), \Cons_{\follower}^{r}(\fcontrol^{n})) + e(\Cons_{\follower}^{r}(\fcontrol^{n}), \hat{\Cons}_{\follower}(\hat{\follower})) \label{eqn:triangle1} \\
    &=
    h(\hat{\Cons}_{\follower}(\fcontrol^{n}), \Cons_{\follower}^{r}(\fcontrol^{n})) + e(\Cons_{\follower}^{r}(\fcontrol^{n}), \hat{\Cons}_{\follower}(\hat{\fcontrol})). \label{eqn:triangle2}
  \end{align}
  Let~$\varepsilon > 0$ be arbitrary.
  Since~$\lim_{r \to \infty} h(\Cons_{\follower}^{r}(\fcontrol), \hat{\Cons}_{\follower}(\fcontrol))$ uniformly on~$\fcontrol \in \fControl$, there exists~$R > 0$ in which 
  \begin{align}
    h(\hat{\Cons}_{\follower}(\fcontrol), \Cons_{\follower}^{r}(\fcontrol)) < \frac{\varepsilon}{2} \label{eqn:eps/2--1}
  \end{align}
  for any~$r > R$ and~$\fcontrol \in \fControl$.
  Take any~$r > R$, the above inequality gives
  \begin{align*}
    \Cons_{\follower}^{r}(\hat{\fcontrol}) \subset \Nhood_{\varepsilon/\alpha}[\hat{\Cons}_{\follower}(\hat{\fcontrol})].
  \end{align*}
  Together with the facts that each map~$\Cons_{\follower}^{r}$ is upper semicontinuous and~$\fcontrol^{n} \to \hat{\fcontrol}$, there exists~$N > 0$ in which
  \begin{align*}
    \Cons_{\follower}^{r}(\fcontrol^{n}) \subset \Nhood_{\varepsilon/2}[\hat{\Cons}_{\follower}(\hat{\fcontrol})]
  \end{align*}
  for all~$n > N$.
  This means, for any~$r,n > \max\set{R,N}$, it holds that
  \begin{align}
    e(\Cons_{\follower}^{r}(\fcontrol^{n}), \hat{\Cons}_{\follower}(\hat{\fcontrol})) < \frac{\varepsilon}{2}. \label{eqn:eqn:eps/2--2}
  \end{align}
  Combining~\eqref{eqn:triangle1}--\eqref{eqn:triangle2} with~\eqref{eqn:eps/2--1} and~\eqref{eqn:eqn:eps/2--2}, we obtain that
  \begin{align*}
    d(\fycontrol^{n}, \hat{\Cons}_{\follower}(\hat{\fcontrol})) < \varepsilon
  \end{align*}
  for each~$n > \max\set{R,N}$.
  It implies that~$\hat{\fycontrol} \in \hat{\Cons}_{\follower}(\hat{\fcontrol})$, and we conclude that~$\hat{\Cons}_{\follower}$ is upper semicontinuous.

  With a similar technique, we could show that~$\hat{\Pref}_{\follower}$ is upper semicontinuous, so that Claim 1 is proved.

  {\bfseries Claim 2:} $\Improvement_{\follower}(\hat{\game}) = \IMPROVEMENT_{\follower}$ for all~$\follower \in \Follower$.

  This claim follows directly from the regularity of~$\regGame$ w.r.t~$\IMPROVEMENT$.

  {\bfseries Claim 3:} For $\fcontrol \in \fControl$,~$\fcontrol_{\follower} \in \hat{\Cons}_{\follower}(\fcontrol_{-\follower})$ implies~$\Nhood_{\tau}[\fcontrol_{\follower}] \cap \hat{\Pref}_{\follower}(\fcontrol) = \emptyset$ for all~$\fcontrol \in \fControl$.

  Let~$\follower \in \Follower$ and take any~$\hat{\fcontrol} \in \fControl$ with~$\hat{\fcontrol}_{\follower} \in \hat{\Cons}_{\follower}(\hat{\fcontrol}_{-\follower})$.
  Using the PK convergence of~$(\Cons_{\follower}^{n})$ to its limit~$\hat{\Cons}_{\follower}$, there exists a sequence~$(\fcontrol^{n})$ in~$X$ such that~$\fcontrol^{n} \to \hat{\fcontrol}$ and~$\fcontrol_{\follower}^{n} \in \Cons_{\follower}^{n}(\hat{\fcontrol}_{-\follower})$ for all~$n \in \N$.
  Applying~\ref{asmp:regGame1} to the point~$(\fcontrol_{\follower}^{n},\hat{\fcontrol}_{-\follower})$ with~$\Cons_{\follower}^{n}(\hat{\fcontrol}_{-\follower})$, we obtain
  \begin{align*}
    \tau \leq d(\fcontrol_{\follower}^{n}, \Pref_{\follower}^{n}(\hat{\fcontrol})) \leq d(\fcontrol_{\follower}^{n}, \hat{\Pref}_{\follower}(\hat{\fcontrol})) + h(\Pref_{\follower}^{n}(\hat{\fcontrol}), \Pref_{\follower}(\hat{\fcontrol}))
  \end{align*}
  for large~$n \in \N$.
  Letting~$n \to \infty$, we have~$\tau \leq d(\hat{\fcontrol}_{\follower}, \hat{\Pref}_{\follower}(\hat{\fcontrol}))$, which is equivalent to saying~$\Nhood_{\tau}[\hat{\fcontrol}_{\follower}] \cap \Pref_{\follower}(\hat{\fcontrol}) = \emptyset$
  The claim is thus proved.

  Accumulating the Claims 1--3, one conclude that~\ref{asmp:A1_to_A3} and~\ref{asmp:A4_strengthened} are satisfied at the limit~$\hat{\game}$.
  This proves~$\hat{\game} \in \Game$ and the closedness of~$\regGame$ guarantees that it is complete.
\end{proof}

Following the proof of Theorem~\ref{thm:existence}, one have a fixed point characterization of elements in~$\NE(\cdot)$.
We define a function~$\gapfun : \Game \times \fControl \to \R_{+}$ by
\begin{align*}
  \gapfun(\game;\fcontrol) &:= \max_{\follower \in \Follower} d(\fcontrol_{\follower},\Gmap_{\follower}(\game;\fcontrol)), \\
  \text{where}\quad \Gmap_{\follower}(\game;\fcontrol) &:= \begin{cases}
    \Cons_{\follower}(\fcontrol_{-\follower}) \cap \Pref_{\follower}(\fcontrol) &\text{if~$\fcontrol \in \Improvement_{\follower}(\game)$}, \\
    \Cons_{\follower}(\fcontrol_{-\follower}) &\text{otherwise},
  \end{cases}
\end{align*}
for any~$\game = (\Cons_{\follower},\Pref_{\follower})_{\follower \in \Follower} \in \Game$.
One may observe that~$\gapfun(\game;\cdot)$ is a gap function for the game~$\game$, {\itshape i.e.} it is nonnegative and~$\gapfun(\game;\fcontrol) = 0$ if and only if~$\fcontrol \in \NE(\game)$.
\begin{lemma}\label{lem:lsc}
  The gap function~$\gapfun : \Game \times \fControl \to \R_{+}$ is lower semicontinuous on~$\regGame \times \fControl$, provided that~$\regGame \subset \Game(\IMPROVEMENT)$ is regular w.r.t.~$\IMPROVEMENT$.
\end{lemma}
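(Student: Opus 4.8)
The plan is to reduce the lower semicontinuity of $\gapfun$ to a one-player-at-a-time statement and then to split according to whether the limiting profile activates the improvement region. Since $\Follower$ is finite and $\gapfun(\game;\fcontrol)=\max_{\follower\in\Follower}\psi_{\follower}(\game;\fcontrol)$ with $\psi_{\follower}(\game;\fcontrol):=d(\fcontrol_{\follower},\Gmap_{\follower}(\game;\fcontrol))$, and a finite maximum of lower semicontinuous functions is lower semicontinuous, it suffices to show that each $\psi_{\follower}$ is lower semicontinuous on $\regGame\times\fControl$ (it is real-valued there, because $\Gmap_{\follower}$ has nonempty values by~\ref{asmp:A2} together with the definition of $\Improvement_{\follower}$). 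Accordingly, I fix $\follower\in\Follower$, a point $(\game,\fcontrol)\in\regGame\times\fControl$, and a sequence $(\game^{n},\fcontrol^{n})\to(\game,\fcontrol)$ in $\regGame\times\fControl$, so that $\game^{n},\game\in\regGame$, $\rho(\game^{n},\game)\to 0$ and $\fcontrol^{n}\to\fcontrol$, and the goal is $\liminf_{n}\psi_{\follower}(\game^{n};\fcontrol^{n})\ge\psi_{\follower}(\game;\fcontrol)$. Abbreviating $A^{n}:=\Cons_{\follower}^{n}(\fcontrol^{n}_{-\follower})$, $B^{n}:=\Pref_{\follower}^{n}(\fcontrol^{n})$, $A:=\Cons_{\follower}(\fcontrol_{-\follower})$ and $B:=\Pref_{\follower}(\fcontrol)$, the first step is the one-sided convergence $e(A^{n},A)\to 0$ and $e(B^{n},B)\to 0$: by subadditivity of the excess,
\[
  e(A^{n},A)\le h\bigl(\Cons_{\follower}^{n}(\fcontrol^{n}_{-\follower}),\Cons_{\follower}(\fcontrol^{n}_{-\follower})\bigr)+e\bigl(\Cons_{\follower}(\fcontrol^{n}_{-\follower}),A\bigr),
\]
where the first term is $\le\rho(\game^{n},\game)\to 0$ (this is exactly where the uniform metric $\rho$ is essential) and the second tends to $0$ because $\Cons_{\follower}$ is upper semicontinuous at $\fcontrol_{-\follower}$ by~\ref{asmp:A1}; the argument for $B^{n}$ is identical, applying~\ref{asmp:A1} to $\Pref_{\follower}$ at $\fcontrol$. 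I will also use the elementary inequalities $|d(x,S)-d(x',S)|\le\|x-x'\|$ and $d(x,S)\le d(x,T)+e(T,S)$.

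Suppose first $\fcontrol\notin\IMPROVEMENT_{\follower}$. Then $\psi_{\follower}(\game;\fcontrol)=d(\fcontrol_{\follower},A)$, while for every $n$ one has $\Gmap_{\follower}(\game^{n};\fcontrol^{n})\subset A^{n}$ regardless of whether $\fcontrol^{n}\in\IMPROVEMENT_{\follower}$. Hence
\[
  \psi_{\follower}(\game^{n};\fcontrol^{n})\ge d(\fcontrol^{n}_{\follower},A^{n})\ge d(\fcontrol^{n}_{\follower},A)-e(A^{n},A)\ge d(\fcontrol_{\follower},A)-\|\fcontrol^{n}_{\follower}-\fcontrol_{\follower}\|-e(A^{n},A),
\]
and letting $n\to\infty$ yields $\liminf_{n}\psi_{\follower}(\game^{n};\fcontrol^{n})\ge d(\fcontrol_{\follower},A)=\psi_{\follower}(\game;\fcontrol)$.

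Now suppose $\fcontrol\in\IMPROVEMENT_{\follower}$. Since $\IMPROVEMENT_{\follower}$ is open in $\fControl$ and $\fcontrol^{n}\to\fcontrol$, we have $\fcontrol^{n}\in\IMPROVEMENT_{\follower}=\Improvement_{\follower}(\game^{n})$ for all large $n$, so for such $n$, $\Gmap_{\follower}(\game^{n};\fcontrol^{n})=A^{n}\cap B^{n}$ (nonempty), while $\Gmap_{\follower}(\game;\fcontrol)=A\cap B$ (nonempty). This is the delicate case, because the distance to an \emph{intersection} of two converging sets need not behave well, and it is exactly what~\ref{asm:regGame2} is meant to govern. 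Fix $\varepsilon>0$. For $n$ large we have $e(A^{n},A)<\varepsilon$ and $e(B^{n},B)<\varepsilon$, hence $A^{n}\cap B^{n}\subset\Nhood_{\varepsilon}[A]\cap\Nhood_{\varepsilon}[B]$; invoking~\ref{asm:regGame2} \emph{at the limit profile} $\game$ (which lies in $\regGame$, being the base point) at the point $\fcontrol\in\IMPROVEMENT_{\follower}$ with parameter $\varepsilon$ gives $\Nhood_{\varepsilon}[A]\cap\Nhood_{\varepsilon}[B]\subset\Nhood_{\alpha\varepsilon}[A\cap B]$, so $e(A^{n}\cap B^{n},A\cap B)\le\alpha\varepsilon$ for all large $n$. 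As before,
\[
  \psi_{\follower}(\game;\fcontrol)=d(\fcontrol_{\follower},A\cap B)\le d(\fcontrol^{n}_{\follower},A^{n}\cap B^{n})+\|\fcontrol_{\follower}-\fcontrol^{n}_{\follower}\|+\alpha\varepsilon=\psi_{\follower}(\game^{n};\fcontrol^{n})+\|\fcontrol_{\follower}-\fcontrol^{n}_{\follower}\|+\alpha\varepsilon
\]
for all large $n$; passing to $\liminf_{n}$ and then letting $\varepsilon\downarrow 0$ gives $\psi_{\follower}(\game;\fcontrol)\le\liminf_{n}\psi_{\follower}(\game^{n};\fcontrol^{n})$, which completes the argument.

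The only genuinely delicate point is this last case: the assignment sending a pair of sets to the distance from a fixed point to their intersection is not continuous under Hausdorff convergence of the two sets in general, and the crux is to invoke~\ref{asm:regGame2} at the \emph{limit} game $\game$ rather than at the approximants $\game^{n}$ — the uniform metric $\rho$, together with the upper semicontinuity of the fixed limit maps $\Cons_{\follower},\Pref_{\follower}$ from~\ref{asmp:A1}, is precisely what confines the perturbed intersections $A^{n}\cap B^{n}$ to arbitrarily thin enlargements of $A\cap B$. Everything else is bookkeeping with the triangle inequalities for $d$, $e$ and $h$, and with the openness of $\IMPROVEMENT_{\follower}$, which pins down the active branch of $\Gmap_{\follower}$ near $\fcontrol$.
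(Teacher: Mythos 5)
Your proof is correct, but it is organized differently from the paper's. The paper first establishes (as its Claim~1) the \emph{uniform} Hausdorff convergence $\Gmap_{\follower}(\game^{n};\cdot)\to\Gmap_{\follower}(\hat{\game};\cdot)$ over all of $\fControl$, invoking the regularity condition~\ref{asm:regGame2} at both the approximating games and the limit game, and then combines this with the lower semicontinuity of $\fcontrol\mapsto d(\fcontrol,\Gmap_{\follower}(\hat{\game};\fcontrol))$ (deduced from upper semicontinuity of $\Gmap_{\follower}(\hat{\game};\cdot)$ via \cite[Corollary~1.4.17]{zbMATH00045282}) and a nearest-point selection argument. You instead reduce to the per-player functions $d(\fcontrol_{\follower},\Gmap_{\follower}(\cdot;\cdot))$, split on whether the limit point lies in $\IMPROVEMENT_{\follower}$, and run a direct chain of excess estimates along the single sequence $(\game^{n},\fcontrol^{n})$; this needs only the one-sided bounds $e(A^{n},A)\to0$, $e(B^{n},B)\to0$ (obtained from $\rho(\game^{n},\game)\to0$ plus upper semicontinuity of the \emph{limit} maps from~\ref{asmp:A1}), and invokes~\ref{asm:regGame2} only at the limit profile. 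What your route buys is economy: no uniform convergence claim, no separate appeal to the semicontinuity of the distance-to-a-moving-set function, and a cleaner treatment of the branch switch in $\Gmap_{\follower}$ via the observation that $\Gmap_{\follower}(\game^{n};\fcontrol^{n})\subset\Cons_{\follower}^{n}(\fcontrol^{n}_{-\follower})$ in the non-improvement case. What the paper's route buys is a reusable intermediate fact (uniform convergence of the gap maps themselves), though its final step implicitly relies on exactly the case analysis you make explicit. Both arguments hinge on the same crux, namely using~\ref{asm:regGame2} to pass from convergence of $\Cons_{\follower}$ and $\Pref_{\follower}$ separately to control of their intersection, so the two proofs are equivalent in substance if not in presentation.
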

\begin{proof}
  Let~$(\game^{n})$,~$\game^{n} = (\Cons_{\follower}^{n},\Pref_{\follower}^{n})_{\follower \in \Follower}$, be a sequence in~$\regGame$ that is convergent to~$\hat{\game} = (\hat{\Cons}_{\follower},\hat{\Pref}_{\follower}) \in \regGame$.

  {\bfseries Claim 1:} For any~$\follower \in \Follower$,~$\Gmap_{\follower}(\game^{n};\fcontrol) \to \Gmap_{\follower}(\hat{\game};\fcontrol)$ uniformly over~$\fcontrol \in \fControl$.

  It is sufficient to show that~$\Cons_{\follower}^{n}(\fcontrol_{-\follower}) \cap \Pref_{\follower}^{n}(\fcontrol) \to \hat{\Cons}_{\follower}(\fcontrol_{-\follower}) \cap \hat{\Pref}_{\follower}(\fcontrol)$ uniformly on~$\fcontrol \in \Improvement_{\follower}$.
  It is clear that we have
  \begin{align}\label{eqn:uniform_convergence_Cons_Pref}
    \Cons_{\follower}^{n}(\fcontrol) \to \hat{\Cons}_{\follower}(\fcontrol) \quad \text{and} \quad \Pref_{\follower}^{n}(\fcontrol) \to \hat{\Pref}_{\follower}(\fcontrol)
  \end{align}
  uniformly on~$\fcontrol \in \fControl$.
  Let us fix~$\follower \in \Follower$ and~$\fcontrol \in \Improvement_{\follower}$.
  For any~$\varepsilon > 0$, and~$n \in \N$ sufficiently large, we have
  \begin{alignat*}{2}
    & \hat{\Cons}_{\follower}(\fcontrol_{-\follower}) \subset \Nhood_{\varepsilon/\alpha}[\Cons_{\follower}^{n}(\fcontrol_{-\follower})], \quad
    && \Cons_{\follower}^{n}(\fcontrol_{-\follower}) \subset \Nhood_{\varepsilon/\alpha}[\hat{\Cons}_{\follower}(\fcontrol_{-\follower})] \\
    & \hat{\Pref}_{\follower}(\fcontrol) \subset \Nhood_{\varepsilon/\alpha}[\Pref_{\follower}^{n}(\fcontrol)], \quad
    && \Pref_{\follower}^{n}(\fcontrol) \subset \Nhood_{\varepsilon/\alpha}[\hat{\Pref}_{\follower}(\fcontrol)].
  \end{alignat*}
  For large~$n \in \N$, the regularity of~$\regGame$ yields
  \begin{alignat*}{3}
    \hat{\Cons}_{\follower}(\fcontrol_{-\follower}) \cap \hat{\Pref}_{\follower}(\fcontrol) 
    &\subset \Nhood_{\varepsilon/\alpha}[\Cons_{\follower}^{n}(\fcontrol_{-\follower})] \cap \Nhood_{\varepsilon/\alpha}[\Pref_{\follower}^{n}(\fcontrol)] 
    &&\subset \Nhood_{\varepsilon}[\Cons_{\follower}^{n}(\fcontrol_{-\follower}) \cap \Pref_{\follower}^{n}(\fcontrol)] \\
    \Cons_{\follower}^{n}(\fcontrol_{-\follower}) \cap \Pref_{\follower}^{n}(\fcontrol) 
    &\subset \Nhood_{\varepsilon/\alpha}[\hat{\Cons}_{\follower}(\fcontrol_{-\follower})] \cap \Nhood_{\varepsilon/\alpha}[\hat{\Pref}_{\follower}(\fcontrol)] 
    &&\subset \Nhood_{\varepsilon}[\hat{\Cons}_{\follower}(\fcontrol_{-\follower}) \cap \hat{\Pref}_{\follower}(\fcontrol)].
  \end{alignat*}
  This means we have
  \begin{align*}
    h(\hat{\Cons}_{\follower}(\fcontrol_{-\follower}) \cap \hat{\Pref}_{\follower}(\fcontrol), \Cons_{\follower}^{n}(\fcontrol_{-\follower}) \cap \Pref_{\follower}^{n}(\fcontrol)) < \varepsilon
  \end{align*}
  whenever~$n \in \N$ is sufficiently large.
  Since this is uniform over~$\fcontrol \in \Improvement_{\follower}$, we conclude that
  \begin{align*}
    \sup_{\fcontrol \in \Improvement_{\follower}} h(\hat{\Cons}_{\follower}(\fcontrol_{-\follower}) \cap \hat{\Pref}_{\follower}(\fcontrol), \Cons_{\follower}^{n}(\fcontrol_{-\follower}) \cap \Pref_{\follower}^{n}(\fcontrol)) \to 0.
  \end{align*}
  Together with the uniform convergence of~$(\Cons_{\follower}^{n}(\fcontrol))$ over~$\fcontrol \in \fControl \setminus \Improvement_{\follower}$, which could be observed in~\eqref{eqn:uniform_convergence_Cons_Pref}.
  This proves the claim.

  Now, note that~$\Gmap_{\follower}(\game^{n};\cdot)$ and~$\Gmap_{\follower}(\hat{\game};\cdot)$ are upper semicontinuous, and hence the functions~$\fcontrol \mapsto d(\fcontrol,\Gmap_{\follower}(\game^{n};x))$ and~$\fcontrol \mapsto d(\fcontrol,\Gmap_{\follower}(\hat{\game};\cdot))$ are both lower semicontinuous (by~\cite[Corollary 1.4.17]{zbMATH00045282}).
  Take any convergent sequence~$(\fcontrol^{n})$ in~$\fControl$ with the limit~$\hat{\fcontrol} \in \fControl$.
  Let~$\varepsilon > 0$.
  For~$n \in \N$ sufficiently large, we have
  \begin{align*}
    d(\fcontrol^{n}, \Gmap_{\follower}(\hat{\game};\fcontrol^{n})) > d(\hat{\fcontrol}, \Gmap_{\follower}(\hat{\game};\hat{\fcontrol})) - \frac{\varepsilon}{2}.
  \end{align*}
  Using the compactness, for any~$n \in \N$, there exists~$\fucontrol^{n} \in \Gmap_{\follower}(\game^{n};\fcontrol^{n})$ such that
  \begin{align*}
    d(\fcontrol^{n},\fucontrol^{n}) = d(\fcontrol^{n}, \Gmap_{\follower}(\game^{n};\fcontrol^{n})).
  \end{align*}
  From the convergence proved in Claim 1, for large enough~$n \in \N$, we could pick~$\fvcontrol^{n} \in \Gmap_{\follower}({\hat{\game};\fcontrol^{n}})$ such that
  \begin{align*}
    d(\fucontrol^{n},\fvcontrol^{n}) = d(\fucontrol^{n}, \Gmap_{\follower}(\hat{\game};\fcontrol^{n})) \leq h(\Gmap_{\follower}(\game^{n};\fcontrol^{n}), \Gmap_{\follower}(\hat{\game};\fcontrol^{n})) < \frac{\varepsilon}{2}.
  \end{align*}
  Therefore, we have
  \begin{align*}
    d(\fcontrol^{n}, \Gmap_{\follower}(\game^{n};\fcontrol^{n}))
    &=
    d(\fcontrol^{n},\fucontrol^{n}) \\
    &\geq d(\fcontrol^{n}, \fvcontrol^{n}) - d(\fvcontrol^{n},\fucontrol^{n}) \\
    &\geq d(\fcontrol^{n}, \Gmap_{\follower}(\hat{\game};\fcontrol^{n})) - d(\fvcontrol^{n},\fucontrol^{n}) \\
    &> d(\hat{\fcontrol},\Gmap_{\follower}(\hat{\game};\hat{\fcontrol})) - \varepsilon
  \end{align*}
  for large~$n \in \N$.
  Repeat the process for all~$\follower \in \Follower$, we get
  \begin{align*}
    \gapfun(\game^{n};\fcontrol^{n}) > \gapfun(\hat{\game};\hat{\fcontrol}) - \varepsilon
  \end{align*}
  for large~$n \in \N$.
  Thus, the function~$\gapfun$ is lower semicontinuous on~$\regGame \times \fControl$.
\end{proof}

We finally have all the tools to prove our main stability theorem.
We now state and prove the continuity of~$\NE(\cdot)$ based on Lemmas~\ref{lem:completeness} and~\ref{lem:lsc}.
\begin{theorem}\label{thm:stability}
  The map~$\game \mapsto \NE(\game)$ is upper semicontinuous on~$\Psi \subset \Game(\IMPROVEMENT)$, provided that~$\Psi$ is regular w.r.t.~$\IMPROVEMENT$.
\end{theorem}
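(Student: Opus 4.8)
The plan is to reduce the assertion to Lemmas~\ref{lem:completeness} and~\ref{lem:lsc} through the gap-function characterization of equilibria. For every $\game \in \Game$ one has
\[
  \NE(\game) = \set{\fcontrol \in \fControl \mid \gapfun(\game;\fcontrol) = 0} = \set{\fcontrol \in \fControl \mid \gapfun(\game;\fcontrol) \le 0},
\]
because $\gapfun(\game;\cdot)$ is a gap function for $\game$. Since $\fControl$ is compact and each $\NE(\game)$ is the zero set of the lower semicontinuous function $\gapfun(\game;\cdot)$, the map $\NE(\cdot)$ has nonempty compact values, and its upper semicontinuity at a point $\hat\game \in \Psi$ may be tested by the sequential criterion~(iii) of the Preliminaries: it suffices to prove that whenever $\game^{n} \to \hat\game$ in $(\Psi,\rho)$ and $\fcontrol^{n} \in \NE(\game^{n})$ for all $n$, every accumulation point of $(\fcontrol^{n})$ belongs to $\NE(\hat\game)$.

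First I would fix $\hat\game \in \Psi$, a sequence $\game^{n} \to \hat\game$ in $\Psi$, and points $\fcontrol^{n} \in \NE(\game^{n})$ — these exist since every $\game^{n} \in \Game$ has nonempty equilibrium set. Because $\Psi$ is regular w.r.t.~$\IMPROVEMENT$, Lemma~\ref{lem:completeness} makes $(\Psi,\rho)$ a complete metric space; in particular $\Psi$ is closed, so $\hat\game \in \Psi$ and all the hypotheses defining $\Game$ and regularity are in force at $\hat\game$. Let $\hat\fcontrol$ be an arbitrary accumulation point of $(\fcontrol^{n})$ — one exists by compactness of $\fControl$ — realized along a subsequence $\fcontrol^{n_{k}} \to \hat\fcontrol$. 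From $\fcontrol^{n_{k}} \in \NE(\game^{n_{k}})$ we have $\gapfun(\game^{n_{k}};\fcontrol^{n_{k}}) = 0$ for every $k$, while $\game^{n_{k}} \to \hat\game$ and $\fcontrol^{n_{k}} \to \hat\fcontrol$. Lemma~\ref{lem:lsc} tells us $\gapfun$ is lower semicontinuous on $\Psi \times \fControl$, hence
\[
  \gapfun(\hat\game;\hat\fcontrol) \le \liminf_{k \to \infty} \gapfun(\game^{n_{k}};\fcontrol^{n_{k}}) = 0 .
\]
As $\gapfun$ is $\R_{+}$-valued, $\gapfun(\hat\game;\hat\fcontrol) = 0$, i.e. $\hat\fcontrol \in \NE(\hat\game)$. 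By criterion~(iii), $\NE(\cdot)$ is upper semicontinuous at $\hat\game$; since $\hat\game \in \Psi$ was arbitrary, it is upper semicontinuous on $\Psi$.

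The substantive content has already been front-loaded into the two lemmas, where the regularity conditions $(\regGame_{1})$ and $(\regGame_{2})$ do the heavy lifting: the quantitative inclusion $\Nhood_{\varepsilon}[\Cons_{\follower}(\fcontrol_{-\follower})] \cap \Nhood_{\varepsilon}[\Pref_{\follower}(\fcontrol)] \subset \Nhood_{\alpha\varepsilon}[\Cons_{\follower}(\fcontrol_{-\follower}) \cap \Pref_{\follower}(\fcontrol)]$ is exactly what turns uniform Hausdorff convergence of the individual maps $\Cons_{\follower}^{n},\Pref_{\follower}^{n}$ into uniform convergence of their intersections, and thus of the selection maps $\Gmap_{\follower}(\game^{n};\cdot)$, which is the engine behind the lower semicontinuity of $\gapfun$. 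Granting those lemmas, the present theorem is the short and standard step ``lower semicontinuous gap function over a compact feasible set $\Rightarrow$ upper semicontinuous solution map''. I do not expect a genuine obstacle beyond matching hypotheses; the two points that require attention are (a) extracting a convergent subsequence of $(\fcontrol^{n})$, for which compactness of $\fControl$ is indispensable, and (b) keeping the limiting profile $\hat\game$ inside $\Psi$ so that Lemma~\ref{lem:lsc} can be applied there — precisely the closedness supplied by Lemma~\ref{lem:completeness}.
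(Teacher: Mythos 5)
Your proposal is correct and follows essentially the same route as the paper: characterize $\NE(\game)$ as the zero set of the gap function $\gapfun(\game;\cdot)$, then combine the lower semicontinuity of $\gapfun$ from Lemma~\ref{lem:lsc} with the sequential criterion for upper semicontinuity on a compact codomain. Your additional remarks on subsequence extraction and on closedness of $\Psi$ are harmless elaborations of the same argument.
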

\begin{proof}
  Observe that, for each~$\game = (\Cons_{\follower},\Pref_{\follower})_{\follower \in \Follower} \in \regGame$, one may now deduce that
  \begin{align*}
    \NE(\game) = \S(\game) := \set{\fcontrol \in \fControl \mid \gapfun(\game;\fcontrol) = 0} = \argmin_{\fcontrol \in \fControl} \gapfun(\game;\fcontrol).
  \end{align*}
  This set above is nonempty for all~$\game \in \regGame$ in view of Theorem~\ref{thm:existence}.
  To show that~$\NE = \S$ is upper semicontinuous, we take a convergent sequence~$(\game^{n})$ in~$\regGame$ with the limit~$\hat{\game} \in \regGame$.
  Let~$(\fcontrol^{n})$ be a sequence in~$\fControl$ such that~$\fcontrol^{n} \in \S(\game^{n})$, for all~$n \in \N$, and is convergent to~$\hat{\fcontrol} \in \fControl$.
  This means~$\gapfun(\game^{n};\fcontrol^{n}) = 0$ for all~$n \in \N$.
  The lower semicontinuity of~$\gapfun$ implies that~$\gapfun(\hat{\game};\hat{\fcontrol}) = 0$ and that~$\hat{\fcontrol} \in \S(\hat{\game})$.
  Therefore, the map~$\NE = \S$ is upper semicontinuous on~$\regGame$.
\end{proof}

\section{An existence result for SLMFG with direct preference maps}

This final section presents an existence result for the SLMFG with direct preference maps of the form~\eqref{eqn:SLMFG}, with a more specific structure of the followers' problem.
The theorem is, in fact, just an application of the Weierstra\ss{} theorem combined with the stability result from Theorem~\ref{thm:stability}.

\begin{theorem}\label{thm:SLMFG_existence}
  Suppose that
  \begin{enumerate}[label=\upshape(\alph*), leftmargin=*]
    \item\label{asmp:exist1} $\lControl$ is a topological space and~$\lCons \subset \lControl$ is closed,
    \item\label{asmp:exist2} $\lcriterion : \lControl \times \fControl \to \R$ is lower semicontinuous,
    \item\label{asmp:exist3} $\signal : \lControl \to \regGame$ is continuous, where~$\regGame \subset \Game(\IMPROVEMENT)$ is regular w.r.t.~$\IMPROVEMENT$.
  \end{enumerate}
  Then the following SLMFG has a solution:
  \begin{subequations}\label{eqn:SLMFG_thm}
    \begin{empheq}[left=\empheqlbrace]{align}
      \min_{\lcontrol,\fcontrol} \quad
        & \lcriterion(\lcontrol,\fcontrol) \\
        \text{s.t.}  \quad
        & \lcontrol \in \lCons  \\
        & \signal(\lcontrol) = (\Cons_{\follower}^{\lcontrol},\Pref_{\follower}^{\lcontrol})_{\follower \in \Follower} \label{eqn:signal}\\
        & \forall \follower \in \Follower: \label{eqn:ne1}\\
        & \ \lfloor \ \fcontrol_{\follower} \in \Cons_{\follower}^{\lcontrol}(\fcontrol_{-\follower}), \quad \Cons_{\follower}^{\lcontrol}(\fcontrol_{-\follower}) \cap \Pref_{\follower}^{\lcontrol}(\fcontrol) = \emptyset.  \label{eqn:ne2}
    \end{empheq}
  \end{subequations}
\end{theorem}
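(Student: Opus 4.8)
The plan is to recast the bilevel problem~\eqref{eqn:SLMFG_thm} as a single constrained minimization and invoke the Weierstra\ss{} theorem. Unwinding the definition of~$\NE(\cdot)$, a pair~$(\lcontrol,\fcontrol)$ satisfies the constraints~\eqref{eqn:signal}--\eqref{eqn:ne2} of~\eqref{eqn:SLMFG_thm} exactly when~$\lcontrol \in \lCons$ and~$\fcontrol \in \NE(\signal(\lcontrol))$, that is, exactly when~$(\lcontrol,\fcontrol)$ lies in the feasible set
\[
  F := \set{(\lcontrol,\fcontrol) \in \lCons \times \fControl \mid \fcontrol \in \NE(\signal(\lcontrol))}.
\]
Hence any minimizer of~$\lcriterion$ over~$F$ is a solution of~\eqref{eqn:SLMFG_thm}, and the whole task reduces to showing that~$F$ is nonempty and compact so that the lower semicontinuous function~$\lcriterion$ (by~\ref{asmp:exist2}) attains its infimum on it. Nonemptiness is the easy half: for any~$\lcontrol \in \lCons$ we have~$\signal(\lcontrol) \in \regGame \subset \Game(\IMPROVEMENT) \subset \Game$ by~\ref{asmp:exist3}, so Theorem~\ref{thm:existence} — whose hypotheses~\ref{asmp:A1}--\ref{asmp:A3} and~\ref{asmp:A4} are built into membership in~$\Game$, the latter through~\ref{asmp:A4_strengthened} — produces some~$\fcontrol \in \NE(\signal(\lcontrol))$, so that~$(\lcontrol,\fcontrol) \in F$.

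The substantive step is the closedness of~$F$, which I would obtain by analyzing the composed response map~$\Phi := \NE \circ \signal : \lControl \multimap \fControl$. First, $\Phi$ is closed-valued: for each~$\lcontrol$, $\Phi(\lcontrol) = \NE(\signal(\lcontrol)) = \set{\fcontrol \in \fControl \mid \gapfun(\signal(\lcontrol);\fcontrol) = 0}$ is a sublevel set of~$\gapfun(\signal(\lcontrol);\cdot)$, which is lower semicontinuous by Lemma~\ref{lem:lsc} since~$\signal(\lcontrol) \in \regGame$. Second, $\Phi$ is upper semicontinuous: the map~$\NE$ is upper semicontinuous on~$\regGame$ by Theorem~\ref{thm:stability} (this is where the regularity of~$\regGame$ w.r.t.\ $\IMPROVEMENT$ enters), $\signal$ is continuous by~\ref{asmp:exist3}, and the composition of a continuous single-valued map with an upper semicontinuous set-valued map is upper semicontinuous — given an open~$\ONhood \supset \Phi(\lcontrol)$, choose a~$\rho$-ball~$\VNhood$ about~$\signal(\lcontrol)$ in~$\regGame$ with~$\NE(\game') \subset \ONhood$ for all~$\game' \in \VNhood$, and then a neighborhood of~$\lcontrol$ whose image under~$\signal$ lies in~$\VNhood$. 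Since an upper semicontinuous map with closed values into a metric space has closed graph, $\gr(\Phi)$ is closed in~$\lControl \times \fControl$; as~$\lCons$ is closed by~\ref{asmp:exist1}, the feasible set~$F = (\lCons \times \fControl) \cap \gr(\Phi)$ is closed.

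To finish, $F$ is a nonempty closed subset of~$\lCons \times \fControl$, which is compact (here one uses the compactness of the underlying action sets~$\lCons$ and~$\fControl$); hence~$F$ is compact, and the Weierstra\ss{} theorem yields~$(\bar{\lcontrol},\bar{\fcontrol}) \in F$ minimizing~$\lcriterion$ over~$F$, which is a solution of~\eqref{eqn:SLMFG_thm}. I do not expect a genuine obstacle here, since the hard analysis is already carried out in Theorem~\ref{thm:stability}; the two points that need care are (i) verifying that~$\Phi = \NE \circ \signal$ has closed graph, i.e.\ correctly pairing the closed-valuedness of~$\NE(\cdot)$ with its upper semicontinuity and the continuity of~$\signal$, and (ii) the bookkeeping — compactness of~$\lCons$ and~$\fControl$, and hence of~$F$ — that makes the Weierstra\ss{} argument go through.
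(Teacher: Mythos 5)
Your proposal is correct and follows essentially the same route as the paper: reformulate the bilevel problem as minimization of~$\lcriterion$ over the feasible set~$(\lCons \times \fControl) \cap \gr(\NE \circ \signal)$, deduce upper semicontinuity of~$\NE \circ \signal$ from Theorem~\ref{thm:stability} together with the continuity of~$\signal$, conclude that the graph is closed, and invoke the Weierstra\ss{} theorem. Your only additions are the explicit nonemptiness check via Theorem~\ref{thm:existence} and the observation that the Weierstra\ss{} step requires compactness of~$\lCons$ and~$\fControl$ --- a hypothesis the paper uses implicitly but does not list among \ref{asmp:exist1}--\ref{asmp:exist3} --- both of which are fair refinements rather than departures from the paper's argument.
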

\begin{proof}
  One could observe that the constraints~\eqref{eqn:signal}--\eqref{eqn:ne2} reduce simply to~$\fcontrol \in \NE(\psi(\lcontrol))$, or equivalently, to~$(\lcontrol,\fcontrol) \in \gr(\NE \circ \signal)$.
  This means the SLMFG~\eqref{eqn:SLMFG_thm} reduces to
  \begin{subequations}\label{eqn:SLMFG_thm_reformed}
    \begin{empheq}[left=\empheqlbrace]{align}
      \min_{\lcontrol,\fcontrol} \quad
        & \lcriterion(\lcontrol,\fcontrol) \\
        \text{s.t.}  \quad
        & \lcontrol \in \lCons  \\
        & (\lcontrol,\fcontrol) \in \gr(\NE \circ \signal).
    \end{empheq}
  \end{subequations}
  From Theorem~\ref{thm:stability}, the response map~$\NE$ is upper semicontinuous.
  Since~$\signal$ is continuous, the composition~$\NE \circ \signal : \lControl \multimap \fControl$ is again upper semicontinuous.
  As a consequent, the graph of~$\NE \circ \signal$ is closed due to the compactness of~$\fControl$.
  The Weierstra\ss{} theorem then guarantees that~\eqref{eqn:SLMFG_thm_reformed}, and hence~\eqref{eqn:SLMFG_thm}, has a solution.
\end{proof}

In the formulation of SLMFG~\eqref{eqn:SLMFG_thm}, the leader's action~$\lcontrol$ is passed to the followers through the \emph{signal map}~$\psi$, which explicitly selects an abstract economy profile~$\game \in \regGame$.
One could also present Theorem~\ref{thm:SLMFG_existence} (and the problem~\eqref{eqn:SLMFG_thm} herein) more transparently by adopting the parametric approach, {\itshape i.e.,} each follower~$\follower \in \Follower$ has the parametrized constraint map $\Cons_{\follower} : \lControl \times \fControl_{-\follower} \multimap \fControl_{\follower}$ and the parametrized preference map~$\Pref_{\follower} : \lControl \times \fControl \multimap \fControl_{\follower}$.
An alternative presentation of Theorem~\ref{thm:SLMFG_existence} now reads:
If the conditions~\ref{asmp:exist1} and~\ref{asmp:exist2} hold, and the tuple~$(\Cons_{\follower}(\lcontrol,\cdot), \Pref_{\follower}(\lcontrol,\cdot))_{\follower \in \Follower}$ belongs to~$\regGame$, then the corresponding SLMFG of the form~\eqref{eqn:SLMFG} has a solution.



\section*{Acknowledgments}

The authors are grateful to the peer reviewers for their valuable comments, which have had a great impact on the presentation of this paper.
The first author was supported by the Petchra Pra Jom Klao M.Sc. Research Scholarship from King Mongkut's University of Technology Thonburi (Grant No.19/2564).
This research project is supported by Thailand Science Research and Innovation (TSRI) Basic Research Fund under project number FRB670073/0164.

\section*{Author Contributions}

Supervision: Poom Kumam, Parin Chaipunya; 
Investigation: Yutthakan Chummongkhon;
Writing -- Original draft: Yutthakan Chummongkhon, Parin Chaipunya; 
Writing -- Review \& editing, Poom Kumam, Parin Chaipunya.

\renewcommand\bibname{References}


\end{document}